\documentclass[12pt,leqno]{amsart}

\usepackage[a4paper,margin=25mm]{geometry}
\usepackage{graphicx}
\usepackage{float}
\usepackage{tikz}

\newtheorem{theorem}{Theorem}
\newtheorem{proposition}[theorem]{Proposition}
\newtheorem{lemma}[theorem]{Lemma}
\newtheorem{corollary}[theorem]{Corollary}

\newcommand{\vol}{\operatorname{vol}}
\newcommand{\conv}{\operatorname{conv}}

\title{Lattice coverings by parallelotopes containing cube vertices}
\author{Dragomir Grozev}
\address{D. Grozev\\Institute of Mathematics and Informatics\\Bulgarian Academy
of Sciences\\Acad. G. Bonchev 8, 1113 Sofia, Bulgaria}
\email{drago.grozev@gmail.com}

\author{Nikolai Nikolov}
\address{N. Nikolov\\Institute of Mathematics and Informatics\\Bulgarian Academy
of Sciences\\Acad. G. Bonchev 8, 1113 Sofia, Bulgaria
\vspace{1mm}
\newline Faculty of Information Sciences\\
State University of Library Studies and Information Technologies\\
Shipchenski prohod 69A, 1574 Sofia, Bulgaria}
\email{nik@math.bas.bg}
\thanks{N. Nikolov was partially supported by the Bulgarian National Science Fund,
Ministry of Education and Science of Bulgaria under contract KP-06-N82/6.}

\subjclass[2020]{Primary 52C17; Secondary 52C07, 52A38, 52A40}
\keywords{lattice covering, covering criteria, parallelotope,
unit cube, triangulation, pyramid volume, total unimodularity}

\begin{document}

\begin{abstract}
We investigate when a parallelotope containing vertices of the unit cube
$Q_n=[0,1]^n$ covers $\Bbb R^n$ by integer translates. We prove the
covering property for every parallelepiped containing five vertices of $Q_3$,
and, in any dimension, for every parallelotope containing a facet of $Q_n$ and
one vertex of the opposite facet. As a consequence, we obtain the sharp bound
$1/n$ for the volume of a pyramid contained in $Q_n$ with a parallelotope base.
We also give an alternative proof of this bound.
\end{abstract}

\maketitle

\section{Results}
Let $Q_n=[0,1]^n$, where $n\ge 2$. We consider the following question.
\smallskip

\noindent{\bf Question.} Does every parallelotope $P\subset\Bbb R^n$
containing more than half of the vertices of $Q_n$ satisfy
$P+\Bbb Z^n=\Bbb R^n$?
\smallskip

Obviously, the threshold cannot be lowered.

For $n=2$, the answer is affirmative. More generally, a centrally symmetric
convex body in $\Bbb R^2$ containing three noncollinear lattice points
covers the plane by integer translates (see, e.g., \cite{Chalk}).
We prove that the answer is also affirmative for $n=3$.

\begin{proposition}\label{prop:1}
Let $P\subset\Bbb R^3$ be a parallelepiped containing five vertices of $Q_3$.
Then $P+\Bbb Z^3=\Bbb R^3$.
\end{proposition}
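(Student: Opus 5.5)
The plan is to exploit invariance of the covering property under $\Bbb Z^3$–translations and under the symmetry group of $Q_3$ (and more generally under $GL_3(\Bbb Z)$), which reduces the statement to a few combinatorial configurations. A $5$-subset $S$ of vertices of $Q_3$ is determined by its complement, a $3$-subset. Up to the symmetries of the cube, a $3$-subset is one of three types, according to its pairwise distances: $\{1,1,\sqrt2\}$ (three vertices of one face), $\{1,\sqrt2,\sqrt3\}$, or $\{\sqrt2,\sqrt2,\sqrt2\}$ (a face-diagonal triangle). Accordingly, there are exactly three cases for $S$.

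\emph{Case $\{1,1,\sqrt2\}$.} Here $S$ consists of a full facet of $Q_3$ together with one vertex of the opposite facet. This is exactly the situation of the general $n$-dimensional result announced in the abstract (a parallelotope containing a facet of $Q_n$ and one vertex of the opposite facet covers $\Bbb R^n$). I would either invoke that result or prove it here for $n=3$ as the base case. For the direct proof I would argue as follows. Write $P=c+\sum_i[-1,1]a_i$. The facet $[0,1]^2\times\{0\}$ is a lattice parallelogram of area $1$, so its projection to the torus $\Bbb R^2/\Bbb Z^2$ is surjective. Any point of $\Bbb R^3$ can therefore be translated by $\Bbb Z^3$ to lie over that facet at a height in $[0,1)$. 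By convexity, $P$ contains the pyramid over the facet with apex at the extra vertex; by central symmetry, it also contains the reflected pyramid. Together these fill a full height-$1$ slab over each point of the base modulo $\Bbb Z$. I would check this fiberwise, on vertical lines: each vertical line meets $P$ in a segment, and the claim is that this segment has length at least $1$.

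\emph{Case $\{\sqrt2,\sqrt2,\sqrt2\}$.} Here $S=\{0,e_1,e_2,e_3,e_1+e_2+e_3\}$. I would try to find $g\in GL_3(\Bbb Z)$ and $z\in\Bbb Z^3$ such that $g(S)+z$, possibly enlarged by lattice points forced into $P$, becomes a facet-plus-vertex configuration for some lattice basis. The point is that the facet theorem only needs a lattice parallelogram of determinant $1$ in the plane of a basis, together with a point at lattice height $1$, so it is $GL_3(\Bbb Z)$-invariant. If no such $g$ exists, I would use central symmetry instead. $P$ contains $2c-S$ as well as $S$; since $P$ contains $0$ and $e_1+e_2+e_3$, it also contains their reflections. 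I would then determine the position of $c$ for which $P\supseteq\conv(S\cup(2c-S))$ is as small as possible, and verify covering by checking that $\conv(S\cup(2c-S))$ meets every coset $x+\Bbb Z^3$. Concretely, for each of the six Freudenthal simplices $\{x: x_{\pi(1)}\ge x_{\pi(2)}\ge x_{\pi(3)}\}$, I would exhibit a lattice translate contained in the convex hull, using that $0$ and $e_1+e_2+e_3$ are common to all six simplices.

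\emph{Case $\{1,\sqrt2,\sqrt3\}$.} This is, e.g., $S=\{e_1,e_2,e_1+e_2,e_2+e_3,e_1+e_3\}$, and I would treat it in the same way. First I look for a unimodular change of basis reducing it to the facet case. Here $e_1,e_2,e_1+e_2$ together with $e_1+e_2-e_3$ or similar would complete a parallelogram, and convexity together with central symmetry may force the missing vertex into $P$ modulo $\Bbb Z^3$. Failing that, I fall back on the Freudenthal-simplex check.

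I expect the main obstacle to be the two non-facet cases, especially $\{\sqrt2,\sqrt2,\sqrt2\}$. There $S$ contains no lattice parallelogram, so the slab argument does not apply directly. The key difficulty is to use that $P$ is a \emph{parallelotope}, not merely a centrally symmetric convex body, to control how thin $P$ can be in the direction $e_1+e_2+e_3$. I would first try to show that the vertical (direction $e_3$) chord length of $P$ is at least $1$ over every point of the projection to $\Bbb R^2/\Bbb Z^2$. This reduces the problem to a two-dimensional statement about the projection of $P$, which is a centrally symmetric hexagon or parallelogram containing the projections of $S$.
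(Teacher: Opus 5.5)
\textbf{Verdict: there is a genuine gap.} Your case split is correct and matches the paper's: the complements of types $\{1,1,\sqrt2\}$, $\{\sqrt2,\sqrt2,\sqrt2\}$ and $\{1,\sqrt2,\sqrt3\}$ give the paper's configurations (i), (ii), (iii). Citing Theorem~\ref{thm:2} for (i) is fine. The gap is that you have no proof for $S=\{0,e_1,e_2,e_3,\mathbf 1\}$, and each route you list fails.

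\emph{Unimodular reduction.} The ten pairwise sums of points of $S$ are distinct, so $S$ contains no parallelogram. Hence no unimodular change of coordinates turns it into a facet-plus-vertex configuration. You give no reason why further lattice points should lie in $P$.

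\emph{Hull of $S\cup(2c-S)$.} This route cannot work, because central symmetry alone is not enough. For $c=(\frac12,\frac12,\frac14)$, the body $\conv(S\cup(2c-S))$ projects onto $[0,1]^2$ with vertical chord $\frac32-|x_1-x_2|$. Its only chord over $(0.8,0.1)+\Bbb Z^2$ therefore has length $0.8<1$, so it does not cover. In any case, a statement about one chosen centre says nothing about the centre of the given $P$.

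\emph{Lattice translates of Freudenthal simplices.} Such a translate has vertices $z,z+e_i,z+e_i+e_j,z+\mathbf 1$, and no four points of $S$ have this form.

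\emph{Fibrewise chord bound.} This is false even in the facet case. The parallelotope $P=\{(1-t_1-t_3,\,t_2,\,t_3):t\in[0,1]^3\}$ contains $Q_2\times\{0\}$ and $e_3$. Its vertical chord over $(x_1,x_2)$ has length $1-|x_1|$, so every nonempty chord over a point with $x_1\equiv\frac12\pmod 1$ has length $\frac12$. Covering there comes from combining two fibres. So your direct argument for (i) is also wrong; only the citation of Theorem~\ref{thm:2} stands.

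\textbf{The missing ideas are Lemmas~\ref{lem:4} and~\ref{lem:5}.} Write $P=\{a_i\le f_i\le a_i+\Delta_i\}$. The slab conditions decouple, so a simplex fits into a \emph{real} translate of $P$ as soon as each edge vector lies in $P-P=\{|f_i|\le\Delta_i\}$. Lemma~\ref{lem:4} then turns such translates, for all simplices of a periodic triangulation with vertex set $\Bbb Z^3$, into $P+\Bbb Z^3=\Bbb R^3$. So you only need a triangulation whose edge vectors are, up to sign, differences of points of $S$.

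\emph{Case (ii).} Your Freudenthal triangulation does exactly this. Its edge vectors are the seven nonzero $0/1$ vectors, and $e_i=e_i-0$, $e_i+e_j=\mathbf 1-e_k$, $\mathbf 1=\mathbf 1-0$. The paper uses a two-prism triangulation here, but the mechanism is the same. You had the right triangulation and tested it against the wrong body.

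\emph{Case (iii).} Your unimodular idea does work, but not via the parallelogram you guess. The standard Freudenthal fallback would not go through here, since $e_1+e_2$ is not a difference of points of your $S$. In your coordinates, $e_2,e_1,e_2+e_3,e_1+e_3$ form a parallelogram with edges $u=e_1-e_2$ and $v=e_3$, and $e_1+e_2=e_2+w$ with $w=e_1$. Since $\det(u,v,w)=-1$, the map $x\mapsto(u,v,w)^{-1}(x-e_2)$ preserves $\Bbb Z^3$ and sends $S$ to $\{0,e_1,e_2,e_1+e_2,e_3\}$, so Theorem~\ref{thm:2} applies. This is shorter than the paper's explicit triangulation for (iii), but it has no analogue for (ii).
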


Our main result is the following partial answer to the posed question.

\begin{theorem}\label{thm:2}
Let $P\subset\Bbb R^n$ be a parallelotope such that
$Q_{n-1}\times\{0\}\subset P$ and $e_n\in P$.
Then $P+\Bbb Z^n=\Bbb R^n$.
\end{theorem}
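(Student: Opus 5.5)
The plan is to reduce the covering statement to a statement about horizontal sections, using only two facts about $P$: it is convex, and it is centrally symmetric about some center $c$. Put $A=Q_{n-1}\times\{0\}$ and $h=2c_n$. Since $A\subset P$ and $e_n\in P$, the set
\[
K=\conv\bigl(A\cup\{e_n\}\cup(2c-A)\cup\{2c-e_n\}\bigr)
\]
is contained in $P$, so it suffices to show $K+\Bbb Z^n=\Bbb R^n$. This is equivalent to the following: for every $t\in[0,1)$, the sets $\pi\bigl(K\cap\{x_n=t+k\}\bigr)$, $k\in\Bbb Z$, cover $\Bbb R^{n-1}$ modulo $\Bbb Z^{n-1}$. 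Here $\pi$ denotes the projection forgetting the last coordinate. For $t=0$ this is immediate, because the section contains $Q_{n-1}$.

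For $t\in(0,1)$ I will exhibit explicitly a convex piece of a single section that covers the torus.

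\emph{First source.} The pyramid $\conv(A\cup\{e_n\})\subset K$ meets height $t$ in $(1-t)Q_{n-1}$.

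\emph{Second source.} The reflected pyramid $\conv\bigl((2c-A)\cup\{2c-e_n\}\bigr)$ has base at height $h$ and apex at height $h-1$. I will translate it by a suitable integer vector $(w,m)$. Such a translate lies in $K+(w,m)$ rather than in $K$, so I will instead use the mixed simplices: by convexity, $K$ contains $\conv(A\cup\{2c-e_n\})$ and $\conv(\{e_n\}\cup(2c-A))$, and similar combinations. The goal is to find, at height $t$ (mod $1$) inside one section of $K$, two axis-parallel cubes $aQ_{n-1}+p$ and $bQ_{n-1}+q$ with $a+b\ge 1$.

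Which combination to use depends on the position of $h$ relative to $0$ and $1$. I expect a case split according to $h\le 0$, $0<h<2$ (where the two pyramids overlap in height), and $h\ge 2$, with the parity of $\lfloor h\rfloor$ handled by an integer shift in the last coordinate. Once two such cubes lie in one convex section $S$, we have $\conv\bigl(aQ_{n-1}+p,\;bQ_{n-1}+q\bigr)\subset S$.

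The key $(n-1)$-dimensional lemma will then be:
\begin{quote}
if $a,b\ge0$, $a+b=1$, and $p,q\in\Bbb R^{n-1}$, then $\conv\bigl(aQ_{n-1}+p,\;bQ_{n-1}+q\bigr)+\Bbb Z^{n-1}=\Bbb R^{n-1}$.
\end{quote}
I plan to prove it by showing that this hull contains a fundamental domain, built as a ``staircase''. For a point $x$, consider the segment joining $p+a\{\cdot\}$ to $q+b\{\cdot\}$. Concretely, write $u=q-p$. For each $s\in[0,1]$ the hull contains the cube $\bigl((1-s)a+sb\bigr)Q_{n-1}+p+su$. I would then argue coordinatewise, using that the side lengths along the family sum to $1$ over the path, so that every residue class of each coordinate is hit. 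An alternative is a measure/degree argument: the map $s\mapsto$ cube, taken modulo $\Bbb Z^{n-1}$, sweeps the torus. I am not fully sure this lemma holds for arbitrary $u$ without extra care, so if needed I would reduce $u$ modulo $\Bbb Z^{n-1}$ first.

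\emph{Main obstacle.} The delicate step is the geometric one: extracting, in every height case, two cubes in the same section whose side lengths sum to at least $1$. In particular, the sections of the mixed simplices are sheared copies of cubes, not axis-parallel ones. If the convexity of $K$ alone proves insufficient, the fallback is to exploit that $P$ is a parallelotope, and not just a symmetric body. The plan there is to write $P=c+\sum_i[-1,1]v_i$ and use the fact that the whole segment between $A$ and $2c-A$ lies in $P$ in the directions of its generating vectors $v_i$. This zonotope structure should supply the missing axis-parallel cube in each section.
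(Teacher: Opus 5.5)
There is a genuine gap at the very first step. By passing from $P$ to $K=\conv\bigl(A\cup\{e_n\}\cup(2c-A)\cup\{2c-e_n\}\bigr)$ you keep only convexity and central symmetry. For such bodies the statement is false when $n\ge3$.

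Take $P=[-R,R+1]^{n-1}\times[-1,1]$ with $R\ge0$. It contains $A$ and $e_n$, and its center is $c=(\tfrac12,\dots,\tfrac12,0)$. Then $2c-A=A$ and $2c-e_n=(1,\dots,1,-1)$. So $K$ is the bipyramid over $A$ with apices $e_n$ and $(1,\dots,1,-1)$, of volume $2/n<1$, and hence $K+\Bbb Z^n\neq\Bbb R^n$.

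You can see the failure inside your own framework. For $t\in(0,1)$ the only sections of $K$ at heights congruent to $t$ are $[0,1-t]^{n-1}$ (at height $t$) and $[1-t,1]^{n-1}$ (at height $t-1$). So you do get two cubes whose side lengths sum to $1$, but they lie in different sections. Their union has measure $(1-t)^{n-1}+t^{n-1}<1$. This is the case $h=0$ of your case split. No choice of mixed simplices can rescue it, since all of them lie in $K$.

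Independently, your $(n-1)$-dimensional key lemma is false as stated. For $p=q$ and $a=b=\tfrac12$ the hull is a cube of side $\tfrac12$, and reducing $u$ modulo $\Bbb Z^{n-1}$ can lead exactly to $u=0$. For $n-1=2$, even $u=(\tfrac12,\tfrac12)$ gives a hexagon of area $\tfrac34$.

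The missing idea is that the parallelotope structure is not a fallback but the core of the proof. Your closing remark about the zonotope structure does not say how it would be used. The paper uses it through Lemma~\ref{lem:5}: a simplex can be translated into a parallelotope $P$ as soon as all its edge vectors lie in $P-P$. This Helly-type property is special to parallelotopes. For instance, the bipyramid above, after reflecting the first $n-1$ coordinates so that it contains $\mathbf 1$, has all the relevant edge vectors in $K-K$, yet it does not cover.

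With that lemma the proof is short. Reflect so that $\mathbf 1\in P$. Every edge vector of the simplices $T_\tau=\conv\{0,e_{\tau(1)},e_{\tau(1)}+e_{\tau(2)},\dots,\mathbf 1\}$ has the form $\pm\sum_{j\in S}e_j$. Such a vector is either a vertex of $A$ minus $0$, or $\mathbf 1$ minus a vertex of $A$, so it lies in $P-P$. Lemma~\ref{lem:4}, applied to the periodic triangulation with vertex set $\Bbb Z^n$, then gives $P+\Bbb Z^n=\Bbb R^n$.
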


As a corollary, the standard volume bound for lattice coverings gives
$\vol(P)\ge1$. Then an affine change of coordinates implies
the following consequence.

\begin{corollary}\label{cor:3}
A pyramid in $Q_n$ with base an $(n-1)$-dimensional parallelotope has a volume
at most $1/n$.
\end{corollary}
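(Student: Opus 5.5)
We will deduce Corollary~\ref{cor:3} from Theorem~\ref{thm:2} by an affine normalization followed by a volume comparison. Let $T=\conv(B\cup\{a\})\subset Q_n$ be the pyramid, with base $B$ an $(n-1)$-dimensional parallelotope and apex $a$, so that $\vol(T)=\frac1n\,\vol_{n-1}(B)\,h$, where $h$ is the distance from $a$ to the hyperplane $H$ of $B$. Write $B=b_0+\sum_{i=1}^{n-1}[0,1]u_i$ and set $u_n=a-b_0$. Then $\vol(T)=\frac1n|\det(u_1,\dots,u_n)|$, and it suffices to prove $|\det(u_1,\dots,u_n)|\le 1$.

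Let $A$ be the affine map sending $0\mapsto b_0$ and $e_i\mapsto u_i$ for $1\le i\le n$. Then $A$ sends $Q_{n-1}\times\{0\}$ onto $B$ and $e_n$ to $a$, and $|\det A|=|\det(u_1,\dots,u_n)|$. Since $B\cup\{a\}\subset Q_n$, the preimage $P:=A^{-1}(Q_n)$ is a parallelotope containing $Q_{n-1}\times\{0\}$ and $e_n$. By Theorem~\ref{thm:2}, $P+\Bbb Z^n=\Bbb R^n$.

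Now use the standard volume bound for lattice coverings: if a compact set $K$ satisfies $K+\Bbb Z^n=\Bbb R^n$, then $\vol(K)\ge 1$. To see this, reduce $K$ modulo $\Bbb Z^n$ onto the torus $\Bbb R^n/\Bbb Z^n$; the reduction is surjective and measure-nonincreasing, so $\vol(K)\ge \vol(\Bbb R^n/\Bbb Z^n)=1$. Therefore
$$1\le\vol(P)=\vol(Q_n)/|\det A|=1/|\det A|,$$
which gives $|\det A|\le 1$ and hence $\vol(T)\le 1/n$.

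For sharpness, note that $B=Q_{n-1}\times\{0\}$ with apex $e_n$ gives volume exactly $1/n$. The only subtle point in the argument is the degenerate case, and it is harmless: if $\det(u_1,\dots,u_n)=0$, then $\vol(T)=0$ and there is nothing to prove. All the real difficulty is contained in Theorem~\ref{thm:2}, which we are taking as given.
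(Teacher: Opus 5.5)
Your proof is correct and is essentially the paper's own derivation. You normalize by the affine map sending $Q_{n-1}\times\{0\}$ and $e_n$ onto the base and apex, apply Theorem~\ref{thm:2} to $P=A^{-1}(Q_n)$, and conclude from the lattice-covering volume bound $\vol(P)\ge 1$, which you also justify via reduction to the torus. The paper additionally gives an independent proof of $|\det(U,p-a)|\le 1$ using cofactors and total unimodularity, but your route is the one it states first.
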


The constant $1/n$ is sharp as the pyramid with base $[0,1]^{n-1}\times\{0\}$ and 
apex $p=e_n$ shows. We also give an alternative proof of Corollary~\ref{cor:3}.

\section{Proofs}
The covering proofs use the following result.

\begin{lemma}
\label{lem:4} \cite[Lemma~5.3]{B}
Let $K\subset \Bbb R^n$ be a convex body and $\mathcal{T}$ be a triangulation of 
$\Bbb R^n$ with the diameters of simplices uniformly bounded from above. Let $\Lambda$ 
be the vertex set of $\mathcal T$. Assume that for every simplex $\sigma\in\mathcal T$, 
there exists $t_\sigma\in\Bbb R^n$ with $\sigma\subset K+t_\sigma$. 
Then $(K+t)\cap\Lambda\neq\emptyset$ for every $t\in\Bbb R^n$.
\end{lemma}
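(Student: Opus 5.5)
The plan is to pass to a dual covering formulation and then use a nerve-type construction together with a degree argument. First, $(K+t)\cap\Lambda\neq\emptyset$ means $t\in v-K$ for some $v\in\Lambda$. Put $L=-K$ and $U=\bigcup_{v\in\Lambda}(v+L)$; the lemma is then the statement $U=\Bbb R^n$.

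The hypothesis also translates cleanly. The condition $\sigma\subset K+t_\sigma$ means $v-t_\sigma\in K$, i.e.\ $t_\sigma\in v+L$, for every vertex $v$ of $\sigma$. So for every simplex $\sigma\in\mathcal T$ there is a point
\[
s_\sigma\in\bigcap_{v\ \text{vertex of}\ \sigma}(v+L).
\]
The same point works for every face $\tau$ of $\sigma$. Hence every simplex $\tau$ of $\mathcal T$, of any dimension, has such a point $s_\tau$, and we fix one choice for each $\tau$.

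Next we build a continuous map $f\colon\Bbb R^n\to U$ with bounded displacement, using the barycentric subdivision $\mathcal T'$ of $\mathcal T$. Set $f(b_\tau)=s_\tau$, where $b_\tau$ is the barycenter of $\tau$, and extend affinely on each simplex of $\mathcal T'$. A simplex of $\mathcal T'$ has vertices $b_{\tau_0},\dots,b_{\tau_k}$ for a chain $\tau_0\subset\tau_1\subset\dots\subset\tau_k$. Pick any vertex $v$ of $\tau_0$. Then $v$ is a vertex of every $\tau_i$, so every $s_{\tau_i}$ lies in the convex set $v+L$. Therefore the image of that simplex lies in $v+L\subset U$, and $f(\Bbb R^n)\subset U$.

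For the displacement bound, let $d$ bound the diameters of the simplices and let $\delta=\operatorname{diam}L$. If $x\in\tau_k$, then every $b_{\tau_i}$ is within $d$ of $x$, and $|s_{\tau_i}-b_{\tau_i}|\le |s_{\tau_i}-v|+|v-b_{\tau_i}|\le \delta+d$. Hence $|f(x)-x|\le C:=2d+\delta$ for all $x$.

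Finally we show that a continuous map with $|f(x)-x|\le C$ is surjective. Fix $y$ and take $R>|y|+C$. On the sphere $|x|=R$, the straight-line homotopy $(1-s)x+sf(x)$ never equals $y$, since each point of the segment is within $C$ of $x$. So the maps $f-y$ and $\mathrm{id}-y$ are homotopic on that sphere in $\Bbb R^n\setminus\{0\}$, and both have degree $1$. Hence $y\in f(\{|x|\le R\})$; otherwise $f-y$ would extend to the ball without zeros and have degree $0$. (Alternatively, one can apply Brouwer's fixed point theorem to $x\mapsto x-f(x)+y$ on the ball of radius $R$.) Thus $\Bbb R^n=f(\Bbb R^n)\subset U$.

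The main obstacle is the second step: making sure that affine extensions stay inside $U$. A naive choice of one point per vertex of $\mathcal T$ fails, because $U$ is not convex. The barycentric-subdivision trick is exactly what guarantees that each small simplex maps into a single convex set $v+L$.
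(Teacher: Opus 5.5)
The paper does not prove this lemma; it quotes it from \cite[Lemma~5.3]{B}, so there is no in-paper argument to compare against. Your proposal is a correct, self-contained proof.

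In effect you observe that the hypothesis makes $\mathcal T$ a subcomplex of the nerve of the cover $\{v-K\}_{v\in\Lambda}$. You then use the barycentric subdivision to turn this into a continuous map $f\colon\Bbb R^n\to\bigcup_{v\in\Lambda}(v-K)$. The chain argument, which places each simplex of $\mathcal T'$ inside a single convex translate $v-K$, is exactly right. Finally, Brouwer (or degree) shows that a map of bounded displacement is onto.

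Two details should be tidied.

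First, from $s_{\tau_i}-v\in L$ you only get $|s_{\tau_i}-v|\le\max_{\ell\in L}|\ell|$. This can be much larger than $\operatorname{diam}L$ when $0\notin L$. Either replace $\delta$ by $\rho:=\max_{\ell\in L}|\ell|$, which is finite since $K$ is compact, or note that hypothesis and conclusion are both invariant under translating $K$, so you may assume $0\in K$.

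Second, continuity of the piecewise-affine $f$ relies on $\mathcal T$, and hence $\mathcal T'$, being locally finite. Otherwise the arbitrary choices of $s_\tau$ could make $f$ discontinuous at an accumulation point of simplices. Local finiteness is part of the standard notion of a triangulation, and it holds trivially for the periodic triangulations the paper feeds into the lemma, but you should state it.

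Your route makes transparent where each hypothesis enters. Convexity of $K$ is used only to keep each barycentric simplex inside one translate $v-K$. Compactness of $K$ and the diameter bound are used only to get bounded displacement, which is what forces surjectivity.
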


We apply the lemma with $\Lambda=\Bbb Z^n$ and $K=P$, where $P$ is a
parallelotope. We triangulate $Q_n$ in a way that extends periodically to
$\Bbb R^n$. The key step is to show that each simplex of the triangulation
can be translated into $P$.

The next lemma characterizes when a simplex can be translated into a
parallelotope.

\begin{lemma}
\label{lem:5}
A simplex $\sigma$ can be translated into a parallelotope $P$ if and only if
for every pair $u,v$ of vertices of $\sigma$ we have $u-v\in P-P$.
\end{lemma}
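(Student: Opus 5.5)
The plan is to reduce to coordinates adapted to $P$. Write $P=c+\{\sum_{i=1}^n \lambda_i a_i : |\lambda_i|\le 1\}$ with linearly independent $a_1,\dots,a_n$. Let $f_1,\dots,f_n$ be the dual linear functionals, so $f_i(a_j)=\delta_{ij}$. Then
\[
P=\{x : |f_i(x-c)|\le 1 \text{ for all } i\}
\]
and
\[
P-P=\{y : |f_i(y)|\le 2 \text{ for all } i\}.
\]
Everything becomes a statement about the $n$ slabs defining $P$, and the proof treats each slab separately.

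\emph{Necessity.} Suppose $\sigma+t\subset P$. For any vertices $u,v$ of $\sigma$ we have $u+t,\,v+t\in P$, so
\[
u-v=(u+t)-(v+t)\in P-P.
\]
This direction is immediate.

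\emph{Sufficiency.} Assume $u-v\in P-P$ for every pair of vertices $u,v$ of $\sigma$, that is, $|f_i(u)-f_i(v)|\le 2$ for all $i$. Fix $i$ and look at the finite set of numbers $\{f_i(u): u \text{ a vertex of } \sigma\}$. Its diameter $\max_u f_i(u)-\min_u f_i(u)$ is attained by some pair of vertices, so it is at most $2$. Choose $s_i$ to be the midpoint of its range, so that $|f_i(u)-s_i|\le 1$ for every vertex $u$. Since the $f_i$ form a basis of the dual space, there is a unique $t$ with $f_i(c-t)=s_i$ for all $i$, i.e. $f_i(t)=f_i(c)-s_i$. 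Then for every vertex $u$,
\[
|f_i(u+t-c)|=|f_i(u)-s_i|\le 1,
\]
so $u+t\in P$ for all $u$. Convexity of $P$ then gives $\sigma+t\subset P$.

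The key point, and the only step needing care, is that $P$ is an intersection of independently translatable slabs whose normals are linearly independent. This lets the one-dimensional ``diameter $\le$ width'' criterion be applied coordinatewise and the resulting shifts combined into a single translation. For a general convex body, pairwise containment in $K-K$ would not suffice; the parallelotope structure, through the existence of a common $t$, is exactly what makes the criterion work. Nothing beyond convexity and linear algebra is needed, and the argument works for any finite point set in place of the vertices of $\sigma$.
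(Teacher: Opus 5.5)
Your proof is correct and follows essentially the same route as the paper. In both, $P$ is written as an intersection of $n$ slabs with linearly independent normals $f_i$, and $P-P$ is the corresponding set of symmetric slabs. One observes that each range $\{f_i(u)\}$ over the vertices has length at most the slab width, and the per-slab shifts are combined into a single translation using the linear independence of the $f_i$. The only cosmetic differences are that you center each range at its midpoint rather than aligning its minimum $m_i$ with the lower face, and you state the final convexity step explicitly.
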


\begin{proof}
Let
$$P=\left\{x\in \Bbb R^n :
a_i\le f_i(x)\le a_i+\Delta_i,\quad 1\le i\le n \right\},$$
where the $f_i$ are linear functionals (corresponding to the opposite facets) and
$a_i,\Delta_i \in \Bbb R$, $\Delta_i>0, 1\le i\le n$. It follows that
$$P-P= \left\{x\in \Bbb R^n: |f_i(x)|\le \Delta_i,\quad 1\le i\le n \right\}.$$

Let $\sigma_0,\ldots,\sigma_n$ be the vertices of $\sigma$. Suppose that
$\sigma_i-\sigma_j\in P-P$ for every $i,j$, and set
$$m_i:=\min_j f_i(\sigma_j),\quad M_i:= \max_j f_i(\sigma_j),\quad 1\le i\le n.$$
Thus, $M_i-m_i\le \Delta_i$. Hence, for every $\sigma_j, 0\le j\le n$, we have
$$m_i\le f_i(\sigma_j)\le M_i\le m_i+\Delta_i.$$
Since $P$ is nondegenerate, the functionals $f_i$ are linearly independent, so
there exists $t\in\Bbb R^n$ with $f_i(t)=m_i-a_i$. It follows that
$\sigma\subset t+P$. The converse is immediate.
\end{proof}

\begin{proof}[Proof of Theorem~\ref{thm:2}]
Reflecting the first $n-1$ coordinates, if necessary, we may assume that
$\mathbf{1}:=e_1+\cdots+e_n\in P$ in place of $e_n\in P$. These reflections
preserve both $Q_{n-1}\times\{0\}$ and the integer lattice. For any
permutation $\tau$ of
$\{1,2,\dots,n\}$, consider the simplex
$$T_{\tau}:= \conv\{v_0,v_1,\ldots,v_n\};
\quad v_0:=0,\quad v_i:=\sum_{j=1}^i e_{\tau(j)},\ 1\le i\le n.$$
Note that $\mathcal{T}:=\{T_{\tau}: \tau\in S_n\}$ is a triangulation of $Q_n$
that can be extended by translations to a triangulation of $\Bbb R^n$ with
vertex set $\Bbb Z^n$. For any two vertices $u,v$ of $T_{\tau}$, we have
$$u-v=\pm \sum_{j\in A} e_j,\quad \text{ where } A\subset \{1,2,\ldots,n\}.$$
If $n\notin A$, the sum is a vertex of $Q_{n-1}\times\{0\}$; if $n\in A$,
it is the difference between $\mathbf{1}$ and another vertex of that facet.
Since $0,\mathbf{1}\in P$, in both cases $u-v\in P-P$. Hence
Lemma~\ref{lem:5} shows that each simplex can be translated into
$P$. The periodic triangulation has uniformly bounded simplex diameters, so
Lemma~\ref{lem:4} gives $P+\Bbb Z^n=\Bbb R^n$.
\end{proof}

\begin{proof}[Proof of Proposition~\ref{prop:1}]
Write the vertices of $Q_3$ as $ijk$, where $i,j,k\in\{0,1\}$, and set
$A=000$, $B=100$, $C=110$, $D=010$, $A_1=001$,
$B_1=101$, $C_1=111$, and $D_1=011$.
Up to symmetries of the cube, there are three configurations of five vertices:

\noindent(i) $A,B,C,D,A_1$;\quad (ii) $A,B,D,A_1,C_1$;\quad (iii) $A,B,C,D_1,C_1$.
\smallskip

(i) The covering follows from Theorem~\ref{thm:2}.

\begin{figure}[H]
\caption{Two triangulations of a triangular prism.
The added diagonals are shown in red.}
\label{fig:1}
\centering
\begin{tikzpicture}[
    scale=1.1,
    edge/.style={
        draw=black!65,
        line width=0.9pt
    },
    diagonal/.style={
        draw=red!70,
        line width=1.2pt
    },
    hidden/.style={
        dash pattern=on 4pt off 3pt
    },
    vertex/.style={
        circle,
        draw=black,
        fill=blue!70,
        line width=0.5pt,
        inner sep=0pt,
        minimum size=4pt
    },
    every node/.style={font=\large}
]

\foreach \offset/\variant in {0/1, 5.5/2}{
    \begin{scope}[xshift=\offset cm]
        \coordinate (A) at (0,0);
        \coordinate (B) at (3,-1.2);
        \coordinate (C) at (3.6,0);

        \coordinate (A1) at (0,3);
        \coordinate (B1) at (3,1.8);
        \coordinate (C1) at (3.6,3);

        \draw[edge] (A)--(B)--(C);
        \draw[edge,hidden] (A)--(C);
        \draw[edge] (A1)--(B1)--(C1)--cycle;
        \draw[edge] (A)--(A1)
                    (B)--(B1)
                    (C)--(C1);

        \draw[diagonal,hidden] (A)--(C1);
        \ifnum\variant=1\relax
            \draw[diagonal] (A)--(B1)--(C);
        \else
            \draw[diagonal] (A1)--(B)--(C1);
        \fi

        \foreach \point in {A,B,C,A1,B1,C1}{
            \node[vertex] at (\point) {};
        }

        \node[below left] at (A) {$A$};
        \node[below left] at (B) {$B$};
        \node[below, xshift=3pt] at (C) {$C$};
        \node[above left] at (A1) {$A_1$};
        \node[below left, yshift=-3pt, xshift=3pt]
            at (B1) {$B_1$};
        \node[above] at (C1) {$C_1$};
    \end{scope}
}
\end{tikzpicture}
\end{figure}
(ii) We triangulate the prism $ABCA_1B_1C_1$ using the edges
$AC_1, AB_1, B_1C$. The prism $ADCA_1D_1C_1$ is triangulated by translating $AB_1$
and $B_1C$ onto the opposite facets.
\smallskip

(iii) We triangulate the prism $ABCA_1B_1C_1$ using the edges
$AC_1, BA_1, BC_1$. The prism $ADCA_1D_1C_1$ is triangulated by translating $BA_1$
and $BC_1$ onto the opposite facets.

In both cases, (ii) and (iii), the triangulations induced on opposite facets
of $Q_3$ agree under translation. The resulting triangulation therefore extends
periodically to $\Bbb R^3$ and has vertex set $\Bbb Z^3$. Moreover, the difference
of the endpoints of every edge is, up to sign, the difference of two of the five
selected vertices. Hence the covering follows from
Lemmas~\ref{lem:5} and~\ref{lem:4}.
\end{proof}

\begin{proof}[Alternative proof of Corollary~\ref{cor:3}]
Let $p$ be the apex of the considered pyramid.
Write the vertices of its base as $a+U\varepsilon$, $\varepsilon\in\{0,1\}^{n-1}$,
where $U=(u_1,\dots,u_{n-1})$ is an $n\times(n-1)$ matrix. Then the volume of the
pyramid is $|\det(U,p-a)|/n$. Thus it is enough to prove
\begin{equation}\label{eq:1}
\left|\det(U,p-a)\right|\le 1.
\end{equation}

Let $r_j=(u_{j1},\dots,u_{j,n-1})$ be the $j$-th row of $U$, and put
$$P_j=\sum_i\max(u_{ji},0),\quad N_j=\sum_i\max(-u_{ji},0).$$
Since every vertex $a+U\varepsilon$ lies in $Q_n$, the range of the $j$-th
coordinate has length $P_j+N_j$. Hence
\begin{equation}\label{eq:2}
P_j+N_j\le 1.
\end{equation}
Moreover,
\begin{equation}\label{eq:3}
N_j\le a_j\le 1-P_j.
\end{equation}

Let $c_j$ be the cofactor of the $j$-th entry in the last column of $(U,v)$, so
that
$$\det(U,v)=\sum_{j=1}^n c_jv_j.$$
By reflecting suitable coordinates of $Q_n$ and, if necessary, changing the
common sign of the cofactors, we may assume $c_j\ge0$ for all $j$. From
$0\le p_j\le1$ and \eqref{eq:3},
$$\sum_j c_j(p_j-a_j)\le \sum_j c_j(1-N_j),$$
and
$$-\sum_j c_j(p_j-a_j)\le \sum_j c_j(1-P_j).$$
It therefore remains to show that both sums on the right are at most $1$.

Consider first $\sum_jc_j(1-N_j)$. Form the $n\times n$ matrix $Y$ whose $j$-th
row is $y_j=(r_j,1-N_j)$. Expansion in the last column gives
$$\det Y=\sum_jc_j(1-N_j).$$
By \eqref{eq:2}, each row has the convex decomposition
$$y_j=(1-P_j-N_j)e_n +\sum_{u_{ji}>0}u_{ji}(e_n+e_i)+\sum_{u_{ji}<0}(-u_{ji})(-e_i).$$
After multiplying the last coordinate by $-1$, the vectors appearing in this
decomposition become reduced node--arc incidence vectors of a directed graph. The
node--arc incidence matrix of a directed graph is totally unimodular; see, e.g.,
\cite[p.~274]{S}. Hence every square matrix formed from these vectors has
determinant $0$ or $\pm1$. Since each $y_j$ is a convex combination of these
vectors, multilinearity of the determinant gives $|\det Y|\le1$. Therefore
$$\sum_jc_j(1-N_j)\le1.$$
Replacing $U$ by $-U$ interchanges $P_j$ and $N_j$ and changes all cofactors
by the same sign. The same argument therefore gives
$$\sum_jc_j(1-P_j)\le1.$$
This proves \eqref{eq:1}.
\end{proof}

\end{document}